\documentclass[12pt,a4paper]{amsart}
\usepackage{enumerate}
\usepackage{amssymb,latexsym}
\usepackage{amsfonts}
\usepackage{graphicx}
\setcounter{MaxMatrixCols}{30}

\textheight 220mm \textwidth 165 mm \hoffset -20mm \voffset -15mm

\numberwithin{equation}{section}\theoremstyle{plain}

\newtheorem{theorem}{Theorem}[section]
\newtheorem{proposition}{Proposition}[section]
\newtheorem{lem}{Lemma}[section]

\hoffset=-2cm\voffset=-1.5cm \hyphenation{si-que}\textheight=22cm


\begin{document}

\title[Discrete harmonic functions in Lipschitz domains]
{Discrete harmonic functions \\
in Lipschitz domains }
\date{}
\author[S. Mustapha]{Sami Mustapha}
\address{Sami Mustapha, Institut Math\'ematiques de Jussieu,
Sorbonne Universit\'e , Tour 25 5e \'{e}tage Boite 247. 4, place
Jussieu F-75252 PARIS CEDEX 05. } \email{sami.mustapha@imj-prg.fr}
\author[M. Sifi]{Mohamed Sifi}
\address{Mohamed Sifi, Universit\'e de Tunis El Manar,
Facult\'e des Sciences de Tunis, LR11ES11 Laboratoire d'Analyse
Math\'ematique et Applications LR11ES11. 2092, Tunis, Tunisie. }
\email{mohamed.sifi@fst.utm.tn}
\renewcommand{\thefootnote}{}

\footnote{2010 \emph{Mathematics Subject Classification}: Primary
60G50, 31C35; Secondary 60G40, 30F10.}

\footnote{\emph{Key words and phrases}: Random walk in Lipchitz
domain, Discrete harmonic function, Martin boundary.}

\begin{abstract}
We prove  the existence and uniqueness of a discrete nonnegative
harmonic function for a random walk satisfying finite range,
centering and ellipticity conditions, killed when leaving a globally
Lipschitz domain in $\mathbb{Z}^d$. Our method is based on a systematic use of comparison arguments and discrete potential-theoretical
techniques.
\end{abstract}
\maketitle

\section{Introduction and main results}

Random walks conditioned to live in  domains
$\mathcal{C}\subset\mathbb{Z}^d$ are of growing interest because of
the range of their applications in
enumerative combinatorics, in probability theory  and in harmonic analysis (cf. \cite{B1},
\cite{BM}, \cite{DW}, \cite{FIM}, \cite{FR},
  \cite{R1}). Doob
$h$-transforms, where $h$ is harmonic  for the random walk,
positive within $\mathcal{C}$ and vanishing on its boundary
$\partial\mathcal{C}$, are used to perform such conditioning. It is
therefore crucial to identify the set of all positive harmonic functions
 associated with  a  killed random walk.

General results for homogeneous random walks with non-zero drift killed at the boundary of  a
half-space or  an orthant were obtained in \cite{I1},
\cite{IR}, \cite{KR2}. For random walks with zero drift, only few results are
available \cite{BMS}, \cite{DW}, \cite{GS}, \cite{R1},
\cite{R2}. The first systematical result was obtained by K. Raschel,
who introduced in \cite{R2} a new approach based on the
investigation of a functional equation satisfied by the generating
function of the values taken by the harmonic function. This approach allows him to establish the existence of positive harmonic functions for random walks with small steps and zero drift killed at the boundary of the quadrant $\mathbb{N}^2$. It should be also mentioned that  \cite{R2} provides explicit expressions for these harmonic
functions.

In a recent work Ignatiouk-Robert \cite{I2} investigated the properties of harmonic functions for random walks in via  ladder heights. Applying her general results to random walk in a convex cone she deduced the uniqueness (up to a multiplicative constant) of the harmonic function constructed by Denisov and Wachtel in \cite{DW} under some moment condition on the jumps. Alternative constructions of this harmonic function are proposed by Denisov and Wachtel in \cite{DW1}. These new constructions allow them to remove quite restrictive extendability assumption imposed in \cite{DW}. In \cite{RT} Raschel and Tarrago studied the behavior of the Green function for random walks in convex cone which gives the uniqueness of the harmonic function (see also \cite{DuW}.

Regarding spatially inhomogeneous  random walks the problem is more difficult. \\
 Uniqueness of
positive harmonic functions for random walks with symmetric
spatially inhomogeneous increments, killed at the boundary of a half
space, was established in \cite{M2}  and more recently in the case of an orthant \cite{Bou}.

The main purpose of the present paper is to  extend the results of \cite{Bou}  for the whole class of spatially inhomogeneous centered
random walks satisfying finite span
and ellipticity conditions and killed when leaving a globally Lipschitz unbounded
domain in $\mathbb{Z}^d$.

Consider  $\Gamma\subset \mathbb{Z}^d$  a
finite subset of $\mathbb{Z}^d$ and  let $\pi: \mathbb{Z}^d\times
\Gamma\rightarrow [0,1]$ such that
$$\sum_{e\in\Gamma}\pi(x,e)=1,\quad \sum_{e\in\Gamma}\pi(x,e)e=0;\quad e\in\Gamma,\:
x\in\mathbb{Z}^d.$$ Then, we let $\{S(n),\, n\in \mathbb{N}\}=(S_n)_{n\in\mathbb{N}}$ be the
Markov chain on $\mathbb{Z}^d$ defined by
$$\mathbb{P}[S_{n+1}=x+e/S_n=x]=\pi(x,e);\,\,\quad e\in\Gamma,\:
x\in\mathbb{Z}^d,\: n=0,1,\ldots$$ $(S_n)_{n\in\mathbb{N}}$ is a
centered random walk with bounded increments which becomes spatially
homogeneous if we assume the probabilities $\pi(x,e)$ are
independent of $x$. We shall assume that the set $\Gamma$ contains
 all unit vectors in $\mathbb{Z}^d$, i.e. all the vectors
$e_k=(0, \ldots,0,1,0,\ldots0)\in
\mathbb{Z}^d$, where the $1$ is the $k$-th component.
We shall impose to the random walk
$(S_n)_{n\in\mathbb{N}}$ to satisfy the following uniform ellipticity
condition:
\begin{equation}\label{0} \pi(x,e)\geq\alpha,\quad e\in\Gamma,\:
x\in\mathbb{Z}^d,\end{equation} for some $\alpha>0$.

We shall denote by:

 $\bullet\quad\mathcal{C}$ a globally Lipschitz domain of
$\mathbb{Z}^d$ that is, a domain $\mathcal{C}=\mathcal{D}\cap
\mathbb{Z}^d$ where
$$\mathcal{D}=\left\{(x_1,x')\in\mathbb{R}\times\mathbb{R}^{d-1};x_1>
\varphi(x')\right\}$$ for some Lipschitz function on
$\mathbb{R}^{d-1}$ satisfying
$$|\varphi(x')-\varphi(y')|\leq A|x'-y'|,\quad x',y'\in
\mathbb{R}^{d-1},$$ for some $A>0$, where $|.|$ denote the Euclidean norm. We shall assume that $\varphi(0)=0$.

$\bullet\quad\tau$ the first exit time from $\mathcal{C}$, i.e.,
$$\tau=\inf\{n=0,1,\ldots;\,\,S_n\notin\mathcal{C}\}.$$

$\bullet\quad G_x^y$, $x,y\in\mathcal{C}$,  the Green function
defined by $$G_x^y=\sum_{n\in \mathbb{N}}\mathbb{P}_x(S_n=y,
\tau>n).$$

We are interested in positive functions $h$ which are discrete
harmonic for the random walk $(S_j)_{j\in\mathbb{N}}$ killed at the
boundary of $\mathcal{C}$, i.e. in functions
$h:\overline{\mathcal{C}}\rightarrow \mathbb{R}_+$ such that:
\begin{itemize}\item [\textit{i)}] For any $x\in \mathcal{C}$,
$\,h(x)=\displaystyle \sum_{e\in\Gamma}\pi(x,e)h(x+e)$;
\item [\textit{ii)}] If $x\in\partial\mathcal{C}$,  then $h(x)=0$;
\item [\textit{iii)}] If $x\in\mathcal{C}$,  then $h(x)>0$;
\end{itemize}
where
$\, \overline{\mathcal{C}}=\partial\mathcal{C}\cup\mathcal{C}$.
The boundary of a set $A\subset \mathbb{Z}^d$ is defined by
$$\partial A=\{x\in A^c, x=z+e\;\mbox{for some}\; z\in A\; \mbox{and}\; e\in \Gamma\}$$ and  $\overline{A}=A\cup \partial A$.
In terms of the first exit time of the random walk from
$\mathcal{C}$, we have that
$$h(x)=\mathbb{E}^x\left(h(S_1),\tau>1\right),\quad
x\in\mathcal{C}.$$\begin{theorem}\label{thm1} Let
$(S_n)_{n\in\mathbb{N}}$ be a centered random walk
satisfying the above finite support and ellipticity
conditions. Assume that $\mathcal{C}$ is a globally Lipschitz domain
of $\mathbb{Z}^d$. Then, up to a multiplicative constant, there
exists a unique positive function, harmonic for the random walk
killed at the boundary.
\end{theorem}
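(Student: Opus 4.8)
The plan is to treat existence and uniqueness separately, with both resting on a boundary Harnack principle (BHP) for the killed walk whose constants are uniform across all scales. Scale–uniformity is available because the dilate $\lambda\mathcal{D}$ is again of the form $\{x_1>\varphi_\lambda(x')\}$ with $\varphi_\lambda(x')=\lambda^{-1}\varphi(\lambda x')$, which has the same Lipschitz constant $A$ and still vanishes at $0$; thus the Lipschitz character of $\mathcal{C}$ is dilation invariant, and $0\in\partial\mathcal{C}$. As preparation I would develop the discrete potential theory for $(S_n)$ killed on leaving $\mathcal{C}$: the interior Harnack inequality together with Harnack chains inside $\mathcal{C}$ (from ellipticity and the Lipschitz geometry); exit and crossing estimates of the type $\mathbb{P}_x(\tau>n)$, and comparison of the law of $S_{\sigma_R}$ on $\partial B(0,R)\cap\mathcal{C}$ from different starting points (here $\sigma_R$ is the first exit time from $B(0,R)$); the Carleson estimate, i.e. a positive $v$ harmonic in $\mathcal{C}\cap B(Q,2r)$ and vanishing on $\partial\mathcal{C}\cap B(Q,2r)$ satisfies $v\le C\,v(A_r(Q))$ on $\mathcal{C}\cap B(Q,r)$, where $A_r(Q)\in\mathcal{C}$ is a corkscrew point with $|A_r(Q)-Q|\le r$ and $\mathrm{dist}(A_r(Q),\partial\mathcal{C})\ge cr$; and finally the BHP itself: for two such functions $v_1,v_2$ one has $v_1(x)/v_1(A_r(Q))\asymp v_2(x)/v_2(A_r(Q))$ on $\mathcal{C}\cap B(Q,r)$, with all constants depending only on $d,A,\alpha$.

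For existence I would fix $x_0\in\mathcal{C}$ and a sequence $(y_k)\subset\mathcal{C}$ with $|y_k|\to\infty$, say along the $x_1$–axis, and set $h_k(x)=G_x^{y_k}/G_{x_0}^{y_k}$; this is legitimate since ellipticity gives $G_{x_0}^{y}>0$ and $(S_n)$ leaves $\mathcal{C}$ almost surely, so $G_x^y<\infty$. By the interior Harnack inequality the family $\{h_k\}$ is bounded on compact subsets of $\mathcal{C}$, so a diagonal extraction produces a pointwise limit $h$. Since $x\mapsto G_x^{y_k}$ is harmonic on $\mathcal{C}\setminus\{y_k\}$ and $y_k\to\infty$, the limit $h$ is harmonic in $\mathcal{C}$; since $G_x^{y_k}=0$ for $x\in\partial\mathcal{C}$, $h$ vanishes there; and splitting the occupation sum at the first visit $T_{x_0}$ to $x_0$ gives $G_x^{y}\ge\mathbb{P}_x(T_{x_0}<\tau)\,G_{x_0}^{y}$, whence $h(x)\ge\mathbb{P}_x(T_{x_0}<\tau)>0$ on $\mathcal{C}$ by ellipticity and the connectedness of $\mathcal{C}$. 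Thus $h$ is a positive harmonic function vanishing on the boundary.

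For uniqueness, let $h_1,h_2$ be two such functions normalized by $h_1(x_0)=h_2(x_0)=1$, and put $u=h_1/h_2$. The first step is a global two–sided bound $c\le u\le C$ with $c,C$ depending only on $d,A,\alpha$. To obtain $h_1\le C h_2$, optional stopping at $\tau\wedge\sigma_R$ gives $h_i(x)=\sum_z\mathbb{P}_x(S_{\sigma_R}=z,\ \sigma_R<\tau)\,h_i(z)$, the sum running over the finitely many possible values of $S_{\sigma_R}$; the function $w\mapsto\mathbb{P}_w(S_{\sigma_R}=z,\ \sigma_R<\tau)$ is harmonic in $\mathcal{C}\cap B(0,R)$ and vanishes on $\partial\mathcal{C}\cap B(0,R)$, so the BHP (centred at $0$, scale $R/2$) gives $\mathbb{P}_x(S_{\sigma_R}=z,\ \sigma_R<\tau)\le C\,h_2(x)\,\mathbb{P}_{x_0}(S_{\sigma_R}=z,\ \sigma_R<\tau)$ for $|x|\le R/2$, uniformly in $z$; summing against $h_1(z)$ yields $h_1(x)\le C h_2(x)$ for every $x$, and symmetry (both functions are harmonic throughout $\mathcal{C}$) gives the reverse inequality. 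The second step is a one–step contraction for the oscillation $\omega(R)=\mathrm{osc}_{\mathcal{C}\cap B(0,R)}\,u$: writing $M=\sup_{\mathcal{C}\cap B(0,R)}u$ and $m=\inf_{\mathcal{C}\cap B(0,R)}u$, the functions $Mh_2-h_1$ and $h_1-mh_2$ are nonnegative, harmonic in $\mathcal{C}\cap B(0,R)$, vanish on $\partial\mathcal{C}\cap B(0,R)$, and sum to $(M-m)h_2$, so one of them is $\ge\tfrac12(M-m)h_2$ at the corkscrew point $A_{R/2}(0)$; the BHP propagates this to a bound $\ge c(M-m)h_2$ throughout $\mathcal{C}\cap B(0,R/2)$, giving $\omega(R/2)\le\theta\,\omega(R)$ for some $\theta=\theta(d,A,\alpha)<1$. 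Iterating this and using that $\mathrm{osc}_{\mathcal{C}}u<\infty$ by the global bound forces $\omega(R_0)=0$ for every $R_0$, so $u$ is constant; hence $h_1=u(x_0)\,h_2$, which is the asserted uniqueness.

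I expect the main obstacle to be the BHP together with the crossing and boundary estimates it rests on, in the spatially inhomogeneous discrete setting: one must show that the killed walk in an arbitrary globally Lipschitz graph domain behaves, as far as boundary hitting is concerned, like Brownian motion — Carleson estimate, comparability of exit laws through large spheres, regularity of boundary points — using only the finite range, centering and ellipticity hypotheses, and with constants that are independent both of the scale $R$ and of the particular domain within the Lipschitz class $A$. Once this machinery is in place, the existence construction and the two–step uniqueness argument above are comparatively short.
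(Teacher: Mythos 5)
Your proposal rests on the same pillar as the paper --- the boundary Harnack principle with scale-invariant constants --- but departs from it in both halves of the argument. For existence, the paper extracts a harmonic limit (via the crude ellipticity bound $u(\xi)\le Ce^{C|\xi-\zeta|}u(\zeta)$ and a diagonal process) from the family
$v_l(y)=\bigl(G_{e_1}^y(l+1)-G_{e_1}^y(l)\bigr)\big/\bigl(G_{e_1}^{e_1}(l+1)-G_{e_1}^{e_1}(l)\bigr)$
built from Green functions of the truncations $B_{2^l}\cap\mathcal C$; you instead extract from the Martin kernels $G_x^{y_k}/G_{x_0}^{y_k}$ with $|y_k|\to\infty$. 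These are both standard, essentially interchangeable constructions. For uniqueness the divergence is genuine: the paper, following Aikawa and Ancona, sets $u_p=u_{p-1}+\tfrac{1}{C^2-1}(u_{p-1}-u_0)$, checks inductively that each $u_p$ is again an admissible normalized positive harmonic function, and observes that the explicit formula $u_p=\bigl(\tfrac{C^2}{C^2-1}\bigr)^{p-1}(u_0-u_1)+u_0$ contradicts the BHP-derived two-sided bound $C^{-2}\le u_p/u_0\le C^2$ unless $u_0=u_1$. You instead prove a one-scale oscillation contraction $\mathrm{osc}_{\mathcal C\cap B(0,R/2)}(h_1/h_2)\le\theta\,\mathrm{osc}_{\mathcal C\cap B(0,R)}(h_1/h_2)$ by applying BHP to $Mh_2-h_1$ and $h_1-mh_2$, and then pass to $R\to\infty$ using the a priori global bound on the ratio. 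Both routes are correct; the contraction argument is perhaps a little more transparent about where each hypothesis enters, while the paper's iterate hews closely to the cited references. Two small bookkeeping remarks on your version: the BHP as stated requires harmonicity at scale $3KR$ to conclude at scale $R$, so the corkscrew point and the annulus where you deduce the contraction must be rescaled accordingly (harmless here since $h_1,h_2$ are globally harmonic); and, as you rightly flag, neither your proposal nor the paper's ``Proof of Theorem 1.1'' establishes the BHP itself --- that is the content of Proposition~\ref{prop1}, Theorem~\ref{thm5} and Lemma~\ref{lem2}, which both treatments take as given at this stage.
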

The previous result has an important consequence on the Martin
boundary theory attached to the random walk $(S_n)_{n\in\mathbb{N}}$
killed on the boundary of $\mathcal{C}$. Recall that for a transient
Markov chain on a countable state space $E$, the Martin
compactification of $E$ is the unique smallest compactification
$E_M$ of the discrete set $E$ for which the Martin kernels
$y\rightarrow \displaystyle k_y^x=G_y^x/G_y^{x_0}$ (where  $x_0$
is a given reference state in $E$)
extend continuously for all $x\in E$. The minimal Martin boundary
$\partial_mE_M$ is the set of all those $\gamma\in\partial E_M$ for
which the function $x\rightarrow k_\gamma^x$ is minimal harmonic.
Recall that a harmonic function $h$ is minimal if $0\leq g\leq h$
with $g$ harmonic implies $g=ch$ with some $c>0$. By the
Poisson-Martin boundary representation theorem, every nonnegative
harmonic function $h$ can be written as
$$h(x)=\int_{\partial_mE_M}k_\gamma^x\mu(d\gamma),$$ for a some
positive Borel measure $\mu$ on $\partial_mE_M$  (cf. \cite{Dy},  \cite{Ma},  \cite{NY}).
\\

An immediate consequence of Theorem \ref{thm1} is the following.
\begin{theorem} \label{thm2} For all transient random walks satisfying centering, finite
support and ellipticity conditions and all global Lipschitz domains
of $\mathbb{Z}^d$, the minimal Martin boundary is reduced to one
point.
\end{theorem}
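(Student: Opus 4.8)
The plan is to obtain Theorem \ref{thm2} directly from Theorem \ref{thm1} together with the general structure theory of the Martin boundary, so the proof is essentially formal. The first step is to identify the state space and the relevant cone of harmonic functions. The killed random walk — the chain $(S_n)_{n\in\mathbb N}$ sent to a cemetery at time $\tau$ — is a transient sub-Markovian chain on the countable set $E=\mathcal C$, whose Green function is exactly $G_x^y=\sum_{n\in\mathbb N}\mathbb P_x(S_n=y,\tau>n)$; hence the Martin compactification $E_M$ and the Martin kernels $k_y^x=G_y^x/G_y^{x_0}$ are well defined. A function $h\geq 0$ on $\mathcal C$ is harmonic for this killed chain if and only if, extended by $0$ on $\partial\mathcal C$, it satisfies \textit{i)} and \textit{ii)} of the definition: indeed a step from $x\in\mathcal C$ to $x+e\notin\mathcal C$ is a killing step and contributes $0$ to the averaging identity once $h$ vanishes there. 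Thus the cone of nonnegative harmonic functions of the killed walk in the Martin-theoretic sense is precisely the cone of functions on $\overline{\mathcal C}$ satisfying \textit{i)}, \textit{ii)} (and \textit{iii)} singles out its interior points).

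The second step invokes the Poisson--Martin representation recalled in the introduction: every nonnegative harmonic function of a transient chain is an integral $\int_{\partial_m E_M}k_\gamma^x\,\mu(d\gamma)$ of minimal harmonic functions, and the correspondence $\gamma\mapsto k_\gamma^{\,\cdot}$ is a bijection from the minimal Martin boundary $\partial_m E_M$ onto the set of extreme rays of that cone, normalized by $k_\gamma^{x_0}=1$ (cf.\ \cite{Dy}, \cite{Ma}, \cite{NY}). In particular, to two distinct points $\gamma\neq\gamma'$ of $\partial_m E_M$ would correspond two non-proportional, normalized, minimal harmonic functions $k_\gamma$ and $k_{\gamma'}$. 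Moreover $\partial_m E_M\neq\varnothing$, since by Theorem \ref{thm1} there exists a non-zero nonnegative harmonic function and so the representing measures are non-trivial.

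The third and last step is where Theorem \ref{thm1} enters. Each $k_\gamma$, $\gamma\in\partial_m E_M$, is a non-zero nonnegative harmonic function of the killed walk; by the ellipticity condition \eqref{0} (all unit vectors belong to $\Gamma$ with probability at least $\alpha$), the killed walk is irreducible on $\mathcal C$, so $k_\gamma>0$ everywhere on $\mathcal C$. Extended by $0$ on $\partial\mathcal C$ it therefore satisfies \textit{i)}, \textit{ii)}, \textit{iii)}, and Theorem \ref{thm1} applies: any two such functions coincide up to a multiplicative constant. Together with the normalization $k_\gamma^{x_0}=k_{\gamma'}^{x_0}=1$ this forces $k_\gamma=k_{\gamma'}$, contradicting $\gamma\neq\gamma'$. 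Hence $\partial_m E_M$ consists of a single point, which is the assertion.

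As for the difficulty, there is no genuine obstacle, the statement being a formal consequence of Theorem \ref{thm1}; the only points deserving an explicit word are the identification of the two notions of harmonicity (for the sub-Markovian kernel on $\mathcal C$ versus for functions on $\overline{\mathcal C}$ with zero boundary values) and the remark that minimal harmonic functions are strictly positive inside $\mathcal C$, which is what makes Theorem \ref{thm1} literally applicable to them.
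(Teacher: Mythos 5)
Your argument is correct and is exactly the standard Poisson--Martin reasoning that the paper has in mind when it calls Theorem~\ref{thm2} an ``immediate consequence'' of Theorem~\ref{thm1}: the paper itself gives no proof, so there is nothing to compare against beyond confirming that the three steps you spell out (identification of harmonic functions for the killed chain with the cone of functions on $\overline{\mathcal C}$ satisfying \textit{i)}--\textit{ii)}, the Poisson--Martin representation yielding both non-emptiness of $\partial_m E_M$ from the existence half of Theorem~\ref{thm1} and injectivity of $\gamma\mapsto k_\gamma$, and strict positivity of $k_\gamma$ on $\mathcal C$ from irreducibility so that the uniqueness half of Theorem~\ref{thm1} applies) are precisely what turns ``immediate'' into a proof. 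One tiny point worth keeping in mind, though it does not affect correctness here: $\Gamma$ is only assumed to contain the positive unit vectors, so irreducibility of the killed walk on $\mathcal C$ uses both the presence of $e_1$ (to move upward, staying in $\mathcal C$) and the centering condition together with ellipticity (which forces $\Gamma$ to contain steps with negative components, all of uniformly positive probability); this is implicit in the paper's definition of the Martin kernel, and your appeal to it is legitimate.
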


We conclude this introduction with some comments which may be helpful  in placing
the results of this paper in their proper perspective.

{\it (i)} The proof of Theorem \ref{thm1} given in \cite{Bou} uses in a crucial way the parabolic Harnack principle. We noted in \cite{Bou} that a more satisfactory approach should dispense with parabolic information and restrict to elliptic tools. A way to get round the difficulties encountered in \cite{Bou} is to use a lower estimate for superharmonic extensions of  discrete positive harmonic functions derived by Kuo and Trudinger in \cite{KT0}. This lower estimate encompasses three powerful ingredients: the Aleksandrov-Bakel'man-Pucci's maximum principle, a barrier technique  and a Calder\'{o}n-Zygmund  covering argument. Going trough the superharmonic extension gives an alternative to the use of \cite[Lemma 2.5]{Bou} and
 provides a purely elliptic derivation of \cite[Proposition 2.6]{Bou} . An advantage of this approach is that it allows us to relax the assumptions $0\in \Gamma$ and $\Gamma=-\Gamma$ made in \cite{Bou}.

{\it (ii)} In case of homogeneous symmetric random walks on unbounded Lipschitz domains, the main results of this paper follows from \cite{GS}. Although  the work of Gyrya and Saloff-Coste concerns diffusion on Dirichlet spaces, to derive the desired results for symmetric random walks, it suffices to consider the corresponding cable process (see \cite[\S 2]{BB}). Since the harmonic functions for cable process and the random walk on the corresponding  graph are essentially the same one has all the desired results (namely Theorem  \ref{thm1}, Theorem  \ref{thm2} and Theorem  \ref{thm6}).

{\it (iii)} Spatially inhomogeneous random walks can
 be considered as the discrete analogues of diffusions generated by second-order differential operators in nondivergence form.
 As in \cite{Bou}, the main tools in this paper are discrete versions of Carleson estimate and boundary Harnack inequality (cf. \cite{BaBu},
\cite{Bau}, \cite{FS}, \cite{FSY}).

{\it (iv)} We restrict ourselves in this paper to  random walks in Lipschitz domains. However, the proofs given below should work for a larger class
of domains, for instance uniform or inner uniform domains (cf. \cite{Ai}).

\section{Proof of Theorem \ref{thm1}}
\subsection{Harnack principle}  We say that a function
$u:\overline{A}=A\cup\partial A\rightarrow \mathbb{R}$ is harmonic in $A\subset\mathbb{Z}^d$
if $Lu=0$ in $A$, where $L$ is the difference operator defined by
$$Lu(x)=\sum_{e\in \Gamma}\pi(x,e)u(x+e)-u(x).$$
In addition to an obvious maximum principle, harmonic functions
satisfy, when they are positive, a Harnack principle. For convenience this principle  is formulated in balls. The discrete Euclidean ball of center $y\in\mathbb{Z}^d$ and radius $R\geq 1$ is denoted $B_R(y)$ and simply $B_R$ when $y$ is clearly understood. We shall also have to use cubes. The cube of center $y\in\mathbb{Z}^d$ and sides $2R$, parallel to the coordinate axes is denoted $Q_R(y)$ and simply $Q_R$ when $y$ is clear.
   The following
theorem (see \cite{KT0} and \cite{KT01}) is a centered version of Harnack principle
established by Lawler  \cite{L}
for random walks with
symmetric bounded increments (as well homogeneous and
inhomogeneous).
\begin{theorem} \label{thm3} {\bf (Harnack principle)}  Assume that $u$ is a nonnegative
harmonic function associated to a random walk satisfying centering,
finite support and uniform ellipticity conditions in a ball $B_{2R}(y)$. Then
$$\max_{B_R(y)}u \leq C\,\displaystyle{\min_{B_R(y)}}\,u,$$ where
$C=C(d,\alpha,\Gamma)>0$.
\end{theorem}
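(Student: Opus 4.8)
The plan is to combine an elementary short‑range estimate, valid when the radius is bounded by a constant depending only on $d$ and $\Gamma$, with a scale‑invariant argument of Krylov--Safonov type for the large scales, following the discrete elliptic machinery of Kuo and Trudinger (\cite{KT0}, \cite{KT01}). First I would pass from balls to cubes: since $B_\rho\subset Q_\rho\subset B_{\rho\sqrt d}$, a Harnack chain of $C(d,\Gamma)$ cubes filling $B_R(y)$, each with a fixed enlargement still contained in $B_{2R}(y)$, reduces the theorem to showing $\max_{Q_r(z)}u\leq C\min_{Q_r(z)}u$ whenever $u\geq 0$ is harmonic in $Q_{Nr}(z)$, with a fixed $N=N(d,\Gamma)$. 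For $r\leq r_0$ this is immediate from ellipticity: since $\{e_1,\dots,e_d\}\subset\Gamma$, the centering condition $\sum_{e\in\Gamma}\pi(x,e)e=0$ forces $0$ into the interior of the convex hull of $\Gamma$, so $\Gamma$ positively spans $\mathbb{Z}^d$ and any two points $x,x'\in Q_r$ are joined by a $\Gamma$‑path of length $k\leq C(d,\Gamma)r$ staying inside $Q_{Nr}(z)$; following such a path step by step has probability at least $\alpha^{k}$ by \eqref{0}, so the mean‑value identity $u(x)=\mathbb{E}^x\big(u(S_{\sigma\wedge k})\big)$ (with $\sigma$ the first exit time from $Q_{Nr}(z)$) together with $u\geq 0$ gives $u(x)\geq\alpha^{k}u(x')$, which for bounded $r$ is the desired inequality with a constant depending only on $d,\alpha,\Gamma$.

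For large $r$ I would run the discrete Krylov--Safonov scheme. Its first ingredient is the discrete Aleksandrov--Bakel'man--Pucci estimate: if $Lv\geq -g$ on a cube $Q$, then $\max_{\overline{Q}}v\leq\max_{\partial Q}v^{+}+C\,\mathrm{diam}(Q)\big(\sum_{x\in\mathcal{E}}g_+(x)^{d}\big)^{1/d}$, where $\mathcal{E}\subset Q$ is the lattice upper contact set on which the discrete concave envelope of $v$ meets $v$; the proof controls $\#\mathcal{E}$ by the Lebesgue measure of the image of $\mathcal{E}$ under the discrete normal map, and it is here that finite support and centering enter, allowing $Lv$ to be compared with a genuine second difference of the envelope. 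From ABP together with an explicit discrete supersolution on an annulus of cubes (the barrier) one extracts the growth lemma: there are $M>1$ and $\mu\in(0,1)$, depending only on $d,\alpha,\Gamma$, such that whenever $v\geq 0$, $Lv\leq 0$ in $Q_{Nr}$ and $\inf_{Q_r}v\leq 1$, one has $\#\{x\in Q_r:v(x)\leq M\}\geq\mu\,\#\{x\in Q_r\}$.

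One then upgrades this single measure estimate to a power decay: a discrete Calder\'{o}n--Zygmund (``growing ink spots'') covering argument, iterated over dyadic sub‑cubes, turns the growth lemma into $\#\{x\in Q_{2r}:v(x)>M^{k}\}\leq(1-\mu)^{k}\,\#\{x\in Q_{2r}\}$ for any nonnegative supersolution $v$ in $Q_{Nr}$ with $\inf_{Q_{2r}}v\leq 1$, whence the weak Harnack bound $\big((\#Q_{2r})^{-1}\sum_{x\in Q_{2r}}v(x)^{p}\big)^{1/p}\leq C\inf_{Q_{2r}}v$ for a small $p=p(d,\alpha,\Gamma)>0$. Dually, applying ABP to subsolutions with the same barrier and covering yields the local maximum principle $\max_{Q_r}u\leq C\big((\#Q_{2r})^{-1}\sum_{x\in Q_{2r}}u(x)^{p}\big)^{1/p}$ for $u\geq 0$ subharmonic in $Q_{Nr}$. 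Since a harmonic $u$ is at once a subsolution and a supersolution, combining the last two inequalities and using the trivial bound $\min_{Q_{2r}}u\leq\min_{Q_r}u$ gives $\max_{Q_r}u\leq C\min_{Q_r}u$, which is the theorem. (A purely probabilistic alternative would instead couple the walks started at two nearby points of $B_R(y)$, run until they leave $B_{2R}(y)$, using \eqref{0} to force coupling with positive probability at each step; the scheme above, however, stays within the elliptic tools emphasized in the introduction.)

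The step I expect to be the main obstacle is the discreteness itself. The convex/concave‑envelope and contact‑set constructions behind the ABP estimate must be carried out on the lattice $\mathbb{Z}^d$, with the image of the discrete normal map estimated point by point and with the span of the walk handled so that no constant sees the mesh; and all of the measure estimates degenerate once $r$ is of order $1$, so one has to arrange that the large‑scale analytic estimates and the small‑scale combinatorial estimate overlap on a common range of scales. The delicate bookkeeping throughout is to verify that every constant depends only on $d$, $\alpha$ and $\Gamma$ and never on $r$ --- which is precisely what the uniform ellipticity \eqref{0}, the inclusion $\{e_1,\dots,e_d\}\subset\Gamma$, and the finite support of the walk provide at each scale.
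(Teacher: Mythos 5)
The paper does not actually prove Theorem~\ref{thm3}: it is stated as a known result, with a pointer to Kuo--Trudinger (\cite{KT0}, \cite{KT01}) for the general centered case and to Lawler (\cite{L}) for symmetric increments, and no proof is given in the text. Your sketch is a faithful high-level outline of the discrete Krylov--Safonov scheme that those references carry out --- discrete Aleksandrov--Bakel'man--Pucci estimate, barrier on an annulus of cubes, growth lemma, Calder\'{o}n--Zygmund (``ink spots'') iteration to a power decay, then weak Harnack for supersolutions combined with the local maximum principle for subsolutions --- prefaced by a correct elementary short-range estimate from uniform ellipticity and the positive spanning of $\Gamma$; so it is consistent with the sources the paper relies on, and the only ``gap'' (the unproved ABP, barrier, and covering lemmas) is exactly the content delegated to \cite{KT0} and \cite{KT01}, which you explicitly acknowledge.
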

\subsection{Carleson estimate} The classical  Carleson estimate  \cite{C} asserts that a
positive harmonic function vanishing on a portion of the boundary
is bounded, up to a smaller portion, by the value at a fixed point
in the domain with a multiplicative constant independent of the
function.
\begin{theorem} \label{thm5}  Assume that $u$ is a
nonnegative harmonic function in $\mathcal{C}\cap B_{3R}(y)$.
Assume that $u=0$ on $\partial\mathcal{C}\cap B_{2R}(y)$. Then
\begin{equation}\label{1}
    \max\left\{u(x),\,\,x\in \mathcal{C}\cap B_{R}(y)\right\}\leq C\:
    u(y+Re_1),\quad R\geq C,
\end{equation}
where $C=C(d,\alpha,\Gamma,A)>0$ is independent of $y
, R$ and $u$.
\end{theorem}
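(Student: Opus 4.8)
The plan is to follow the classical box-and-chain argument adapted to the discrete setting, using only the interior Harnack principle (Theorem \ref{thm3}) together with an iterated comparison against a suitable barrier/superharmonic function built from the Lipschitz geometry. First I would fix $y\in\partial\mathcal C$ (the generic case; if $B_{2R}(y)$ does not meet the boundary the statement follows directly from Theorem \ref{thm3}) and, after translating, assume $y=0$ with $\varphi(0)=0$. Since $\varphi$ is $A$-Lipschitz, for a dimensional constant $c_0=c_0(A)\in(0,1)$ the vertical segment $t\mapsto te_1$, $t\in[0,R]$, stays in $\mathcal C$ and in fact the ball $B_{c_0 t}(te_1)\subset\mathcal C$ for every such $t$; this is the key elementary geometric fact. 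In particular, the point $Re_1$ is a "corkscrew" point at scale $R$, well inside $\mathcal C$, and $B_{c_0R}(Re_1)\subset\mathcal C\cap B_{2R}(0)$.

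The core of the argument is a decay estimate: there are constants $\theta\in(0,1)$ and $M\ge 1$, depending only on $d,\alpha,\Gamma,A$, such that for any nonnegative $u$ harmonic in $\mathcal C\cap B_{3R}(0)$ with $u=0$ on $\partial\mathcal C\cap B_{2R}(0)$ one has
$$
\max\{u(x):x\in\mathcal C\cap B_{\theta^{k+1}R}(0)\}\ \le\ M^{-1}\,\max\{u(x):x\in\mathcal C\cap B_{\theta^{k}R}(0)\}
$$
for all $k\ge 0$ with $\theta^k R\ge C$, \emph{provided} one normalizes by the value at the corresponding corkscrew point; more precisely I would run the induction on the quantity $\max_{\mathcal C\cap B_{\theta^k R}} u \big/ u(\theta^k R e_1)$, showing it stays bounded. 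To get one step of this: suppose $u(x_0)=\max_{\mathcal C\cap B_{r/2}}u$ is large compared with $u(re_1)$. Connect $re_1$ to a point near $\partial\mathcal C$ at height $\sim\mathrm{dist}(x_0,\partial\mathcal C)$ by a Harnack chain of bounded length (bounded because the Lipschitz character controls the number of balls needed), so that $u$ is still small, comparable to $u(re_1)$, at a point $z$ with $B_{c_1 d(x_0)}(z)\subset\mathcal C$ and $|z-x_0|\lesssim d(x_0)$, where $d(x_0)=\mathrm{dist}(x_0,\partial\mathcal C\cap B_{2R})$. Now inside the larger box $B_{\rho}(x_0)$ with $\rho\sim d(x_0)$, compare $u$ with the harmonic measure of the "good" part of $\partial(\mathcal C\cap B_\rho(x_0))$ — estimated from above by a barrier function adapted to the exterior Lipschitz cone at the nearest boundary point. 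This barrier, or alternatively the lower estimate for superharmonic extensions of Kuo–Trudinger invoked in the introduction, forces $u(x_0)\le C\,\big(d(x_0)/\rho\big)^{\beta}\max_{B_{2\rho}\cap\mathcal C}u + C\,u(z)$; choosing the geometry so that $d(x_0)/\rho$ is a fixed small constant and iterating into the boundary yields the claimed geometric decay.

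Summing the geometric decay over the scales $\theta^k R$ down to scale $\sim C$, and then using the interior Harnack principle on the bottom scale to absorb the $O(1)$-scale behaviour, gives $\max_{\mathcal C\cap B_R(0)}u\le C\,u(Re_1)$, which is \eqref{1}. The main obstacle is the second paragraph: making the one-step decay rigorous requires a genuine \emph{global} boundary barrier valid at every boundary point of the Lipschitz domain at the relevant scale, uniformly in the Lipschitz constant $A$ and compatibly with the discretization (the random walk has finite range $\Gamma$, so "the boundary" is a thickened set and the barrier must dominate $u$ on all of $\partial(\mathcal C\cap B_\rho)$, not just on a hyperplane). Producing such a barrier — equivalently, quantifying that the Lipschitz domain satisfies an exterior corkscrew/capacity-density condition and transferring the Kuo–Trudinger superharmonic lower bound into an upper bound for $u$ near the boundary — is where the real work lies; everything else is Harnack chaining whose length is controlled by $A$.
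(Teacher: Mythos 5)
You have correctly identified the shape of the argument (corkscrew point, Harnack chaining whose length is controlled by the Lipschitz constant $A$, and a decay-of-oscillation estimate at the boundary coming from an exterior-cone/density condition), and you even point at the right tool --- the Kuo--Trudinger weak Harnack estimate for superharmonic functions. But there is a genuine gap, which you acknowledge yourself: the one-step boundary decay is asserted, not proved, and you write that producing the barrier/decay estimate is ``where the real work lies.'' That estimate is precisely the content of the paper's Proposition~\ref{prop1}, and the paper does prove it. The mechanism is short and worth recording: normalize $\max_{\overline{\mathcal{C}\cap Q_{2R}(y)}}u=1$, set $v=1-u$ on $\overline{\mathcal{C}\cap Q_{2R}}$ and $v=1$ on the rest of $\overline{Q_{2R}}$; then $v$ is superharmonic on the full cube $Q_{2R}$, the level set $\{v\ge1\}$ contains $\overline{Q_R}\cap\mathcal{C}^c$, the Lipschitz geometry gives an exterior cone through $y$ so $|\overline{Q_R}\cap\{v\ge1\}|\ge\mu|\overline{Q_R}|$, and the discrete weak Harnack inequality \cite[3.24]{KT01} turns this density bound into $\min_{\overline{Q_R}}v\ge\lambda>0$, i.e.\ $\max_{\overline{\mathcal{C}\cap Q_R}}u\le1-\lambda$. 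No explicit barrier construction is needed; the density condition plus Kuo--Trudinger does the work, and the argument is automatically compatible with the finite range $\Gamma$ because the superharmonic extension $v$ is defined on the whole cube.

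There is a second, smaller divergence. Your outer loop iterates the normalized maximum $\max_{\mathcal{C}\cap B_{\theta^k R}}u/u(\theta^kRe_1)$ over concentric balls shrinking toward $y$ and then ``sums''; as stated this does not feed back into the quantity at scale $R$, which is the one you need, and it is not clear the induction closes. The paper instead uses Carleson's original weighted-maximum device: it bounds $\max_{x\in\mathcal{C}\cap B_{2R}(y)}(2R-|x-y|)^\gamma u(x)$, with $\gamma$ the Harnack-chain exponent. If the maximizer $x$ is close to $\partial\mathcal{C}$ (case \eqref{9}), Proposition~\ref{prop1} at scale $\delta(x)$ pushes the value to a point $z$ farther from $y$, and the decrease of the weight $(2R-|z-y|)^\gamma$ compensates, giving a self-improving inequality with factor $\theta_0<1$; if $\delta(x)$ is comparable to $2R-|x-y|$ (case \eqref{12}), the Harnack chain to $y+Re_1$ bounds the weighted value by $CR^\gamma u(y+Re_1)$ directly. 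Combining the two cases absorbs the weighted maximum and yields \eqref{1}. So even with the decay lemma in hand, you would want to replace the multi-scale iteration by (or reorganize it into) this weighted-maximum absorption argument to close the proof cleanly.
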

The proof of Theorem \ref{thm1} relies on the following Proposition.
\begin{proposition} \label{prop1} Let $y\in
\partial\mathcal{C}$ and  $R$ large enough. Let $u$ be a
nonnegative harmonic function in $\mathcal{C}\cap B_{3\sqrt{d}R}(y)$ which
vanishes on $\partial\mathcal{C}\cap B_{2\sqrt{d}R}(y)$. Then
\begin{equation}\label{7}
\max\left\{u(x),\,\,\,x\:\in\:\overline{\mathcal{C}\cap B_{R}(y)}\right\}\leq
\rho \max\left\{u(x),\,\,\,x\:\in\:{\overline{\mathcal{C}\cap
B_{2\sqrt{d}R}(y)}}\right\},
\end{equation}
with a constant $0<\rho=\rho(d,\alpha,\Gamma,A)<1$.
\end{proposition}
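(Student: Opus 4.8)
The plan is to prove the oscillation-decay estimate \eqref{7} by combining the Carleson estimate (Theorem \ref{thm5}) with the Harnack principle (Theorem \ref{thm3}), exploiting the geometry of a Lipschitz domain. Write $M=\max\{u(x):x\in\overline{\mathcal{C}\cap B_{2\sqrt d R}(y)}\}$; after normalizing we may assume $M=1$, so $u\le 1$ on $\overline{\mathcal{C}\cap B_{2\sqrt d R}(y)}$ and $u=0$ on $\partial\mathcal{C}\cap B_{2\sqrt d R}(y)$. I want to show $u\le\rho$ on $\overline{\mathcal{C}\cap B_R(y)}$ for some $\rho<1$ depending only on $d,\alpha,\Gamma,A$. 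The first step is to bound $u$ at the ``center'' point $y+Re_1$: by the Carleson estimate applied at scale $R$ (with the radii adjusted by the factor $\sqrt d$ to pass between cubes and balls), one gets $u(x)\le C\,u(y+Re_1)$ for all $x\in\mathcal{C}\cap B_R(y)$; but on its own this does not give a gain, since $u(y+Re_1)$ could be as large as $1$.

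The key idea to extract a genuine factor $\rho<1$ is a barrier / superharmonic-extension argument: because $\mathcal{C}$ is Lipschitz with constant $A$, from the point $y+Re_1$ one can reach the interior at definite scale, and conversely the boundary vanishing of $u$ on a full macroscopic portion $\partial\mathcal{C}\cap B_{2\sqrt d R}(y)$ forces $u$ to be small on an intermediate sphere compared with its maximum on the larger ball. Concretely, I would construct a discrete superharmonic comparison function (an $L$-superharmonic barrier, built from the lower estimate of Kuo--Trudinger alluded to in the introduction, or more elementarily from powers of the distance to a suitable exterior cone/ball) that dominates $u$ on $\partial(\mathcal{C}\cap B_{2\sqrt d R}(y))$: it equals $0$ on $\partial\mathcal{C}$ and equals $1$ on $\partial B_{2\sqrt d R}(y)\cap\mathcal{C}$, while on $B_R(y)$ it is $\le\rho_0<1$. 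The maximum principle for $L$ then gives $u\le\rho_0$ on $\overline{\mathcal{C}\cap B_R(y)}$, which is exactly \eqref{7}. The Lipschitz character enters precisely here: it guarantees that the exterior of $\mathcal{C}$ contains, near $y$, a cone (or a ball) of fixed aperture/radius proportional to $R$, which is what makes the barrier strictly less than $1$ at scale $R$ with a constant independent of $R$ and $y$.

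In carrying this out the order would be: (1) reduce to $M=1$ by homogeneity and fix the reference point $p=y+Re_1$, noting $p\in\mathcal{C}$ with $\mathrm{dist}(p,\partial\mathcal{C})\ge cR$ by the Lipschitz bound; (2) invoke the Carleson estimate to control $u$ on $B_R(y)$ by $u(p)$; (3) build the barrier $w$ on $\mathcal{C}\cap B_{2\sqrt d R}(y)$ with $Lw\le 0$, $w\ge 0$, $w\ge 1$ on $\mathcal{C}\cap\partial B_{2\sqrt d R}(y)$, $w\ge u$ on $\partial\mathcal{C}$, and $\max_{B_R(y)}w=\rho<1$; (4) apply the maximum principle to $u-w$ on $\mathcal{C}\cap B_{2\sqrt d R}(y)$ to conclude. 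I expect step (3) to be the main obstacle: one must produce a discrete superharmonic function for the inhomogeneous operator $L$ — not merely for the Laplacian — with the barrier estimate uniform in $R$, and this is where the centering and ellipticity conditions and the comparison with the continuous superharmonic extension (the Aleksandrov--Bakel'man--Pucci / barrier machinery of \cite{KT0}) are needed; the passage between the discrete lattice and the continuous barrier at the boundary, together with keeping all constants dependent only on $d,\alpha,\Gamma,A$, is the delicate point. A secondary technical nuisance is bookkeeping the radii $R$, $2\sqrt d R$, $3\sqrt d R$ so that the Carleson hypotheses and the barrier domain fit together, but that is routine.
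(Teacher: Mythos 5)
Your proposal correctly identifies the essential ingredients (the Lipschitz exterior-cone condition, the Kuo--Trudinger machinery, and a comparison with a superharmonic function), but there are two problems. First, invoking the Carleson estimate (Theorem \ref{thm5}) is circular: in the paper the Carleson estimate is \emph{proved from} Proposition \ref{prop1} (it is used explicitly at \eqref{10}), so it cannot be an input here. You do note this step gives no gain, so it is a red herring rather than a fatal flaw, but it should be removed. Second, and more seriously, you leave the crux of the argument open: you acknowledge that constructing an explicit discrete $L$-superharmonic barrier $w$ on $\mathcal{C}\cap B_{2\sqrt d R}(y)$ with $w\ge 1$ on the outer sphere, $w\ge 0$ on $\partial\mathcal{C}$, and $\max_{B_R(y)}w\le\rho_0<1$, uniformly over the inhomogeneous operators $L$ and over $R$, is ``the main obstacle,'' and you do not resolve it. As written the proposal therefore has a genuine gap at the decisive step.

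The paper sidesteps this barrier construction entirely. After reducing to cubes and normalizing $\max_{\overline{\mathcal{C}\cap Q_{2R}}}u=1$, it sets $v=1-u$ on $\overline{\mathcal{C}\cap Q_{2R}}$ and $v\equiv 1$ on the rest of $\overline{Q_{2R}}$, observes that this extended $v$ is nonnegative and $L$-superharmonic on $Q_{2R}$, and then directly applies the discrete weak Harnack inequality of Kuo--Trudinger (estimate $[3.24]$ of \cite{KT01}): the infimum of a nonnegative supersolution on $\overline{Q_R}$ is bounded below by a power of the density $|\overline{Q_R}\cap\{v\ge 1\}|/|\overline{Q_R}|$. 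Since the Lipschitz condition supplies an exterior cone whose intersection with $\overline{Q_R}$ occupies a fixed fraction $\mu$ of $\overline{Q_R}$, and $v\equiv 1$ there, this density is bounded below by $\mu$, yielding $\min_{\overline{Q_R}}v\ge\lambda>0$ and hence $u\le 1-\lambda=\rho<1$ on $\overline{Q_R}$. In other words, the paper uses the same ABP/barrier/Calder\'on--Zygmund machinery you gesture at, but packaged once and for all inside the black-box weak Harnack estimate of \cite{KT01} applied to $1-u$, rather than attempting a fresh barrier construction. That packaging is exactly what makes the argument go through uniformly in $R$, in the operator, and in $y$, and it is the step your proposal is missing.
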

\begin{proof}
 To prove \eqref{7} we first observe that it suffices to show that
\begin{equation}\label{71}
    \max\left\{u(x),\: x\in \overline{\mathcal{C}\cap Q_R(y)}\right\}\leq \rho \max \left\{u(x),\; x\in \overline{\mathcal{C}\cap Q_{2R}(y)}\right\}.
\end{equation}
Without loss of generality, we assume $y=0$ and $\max \left\{u(x),\; x\in \overline{\mathcal{C}\cap Q_{2R}}\right\}=1$. Then considering the function $v: \overline{Q_{2R}}\rightarrow\mathbb{R}$ defined by $v=1-u$ in $\overline{\mathcal{C}\cap Q_{2R}}$ and $v=1$ on $\overline{Q_{2R}}\,\backslash\,\overline{\mathcal{C}\cap Q_{2R}}$, we see that \eqref{71} reduces to the following lower estimate
\begin{equation}\label{72}
    v(x)\geq \lambda=\lambda(d,\alpha,\Gamma, A)>0,\quad x\in \overline{Q_R}.
\end{equation}
Since $v$ is superharmonic in $Q_{2R}$ (i.e $Lv\leq 0$ in $Q_{2R}$) we can use the estimate \cite[3.24]{KT01} and deduce that
\begin{equation}\label{73}
    \min_{\overline{Q_R}}v\geq \gamma\left(\frac{\left|\overline{Q_R}\cap \{v\geq 1\}\right|}{|\overline{Q_R}|}\right)^{\frac{\log\gamma}{\log \delta}}
\end{equation}
where $0<\gamma,\delta<1$ are two positive constants depending on $d,\alpha$ and $\Gamma$ and where the notation $|S|$ is used to denote the cardinality of a subset $S\subset\mathbb{Z}^d$.

On the other hand, the fact that $\mathcal{C}$ is Lipschitz allows us to find a circular cone $\mathcal{C}'$ with vertex at the origin such that $\mathcal{C}'\subset \mathcal{C}^c$.
It follows then that there exists a positive constant $\mu$ (depending on $A$) such that for $R$ large enough
\begin{equation}\label{74}
    \left|\overline{Q_R}\cap \{v\geq 1\}\right|\geq\left| \overline{Q_R}\cap \mathcal{C}'\right|\geq \mu |\overline{Q_R}|.
\end{equation}
We conclude from \eqref{73} and \eqref{74} that
$$\min_{\overline{Q_R}}v\geq \gamma^{1+\frac{\log\mu}{\log\delta}},$$
which implies \eqref{72} and completes the proof of \eqref{71}.\end{proof}
 \noindent{\it Proof
of Theorem \ref{thm5}. } To prove the Carleson estimate \eqref{1} we
first observe that the uniform ellipticity assumption implies that $u(\xi)
\leq Ce^{C\,|\xi-\zeta|}u(\zeta)$, $\xi, \zeta \in \mathcal{C}\cap
B_{3R}(y)$; where $C=C(d,\alpha,\Gamma,A)>0$. This local Harnack
principle allows us to assume that the distance of $x$ from
$\partial\mathcal{C}$ is sufficiently large. We shall denote by
$\delta(x)$ ($x\:\in\:\mathcal{C}\cap B_{2R}(y)$) this distance and
suppose that $\delta(x)\geq C$. The fact that $\mathcal{C}$ is
Lipschitz combined with Harnack principle (Theorem \ref{thm3}) imply
that
\begin{equation}\label{8}
    u(x)\leq C\, \left(\frac{R}{\delta(x)}\right)^\gamma
    u(y+Re_1),\quad x\:\in\:\mathcal{C}\cap B_{2R}(y),
\end{equation}
where $\gamma$ and $C$ are positive constants depending on
$d,\alpha,\Gamma$ and $A$.

  Let $x\in \mathcal{C}\cap {B_{2R}(y)}$,
and let us assume that
\begin{equation}\label{9}
    \delta(x) < \left(1-\left(\frac{1+\rho}{2}
    \right)^{\frac{1}{\gamma}}\right)
    \frac{2R-|x-y|}{8\sqrt{d}}
\end{equation}
where $\rho$ is the constant obtained in (\ref{7}) and $\gamma$ the
exponent that appears in (\ref{8}).
Let $x_0\in \partial\mathcal{C}$  such
that $|x-x_0|=\delta(x)$. It follows easily from (\ref{9}) and the
fact that $\delta(x)$ is sufficiently large that
$\overline{B_{3\sqrt{d}\delta(x)}(x_0)}\subset B_{2R}(y)$. By Proposition \ref{prop1}
applied to the harmonic function $u$ in the domain
$B_{{3}\sqrt{d}\delta(x)}(x_0)\cap\mathcal{C}$, we have
\begin{equation}\label{10}
u(x)\leq \max\left\{u(x'),\,\,\, x'\in
\mathcal{C}\cap\overline{B_{\frac{3}{2}\delta(x)}(x_0) }\right\}
\leq\rho \max\left\{u(x'),\,\,\, x'\in \mathcal{C}\cap
{\overline{B_{3\sqrt{d}\delta(x)}(x_0)}} \right\}
.\end{equation} Let $z\in \mathcal{C}\cap
\overline{B_{3\sqrt{d}\delta(x)}(x_0)}$ satisfying
$$u(z)=\max\left\{u(x'),\,\,\, x'\in \mathcal{C}\cap\overline{B_{3\sqrt{d}\delta(x)}(x_0)
} \right\}.$$
 We have
$$(2R-|x-y|)\leq (2R-|z-y|)+8\sqrt{d}\delta(x). $$
Hence, thanks to \eqref{9}
$$(2R-|x-y|)\leq\left(\frac{1+\rho}{2}\right)^{-\frac{1}{\gamma}}(2R-|z-y|).$$
It follows that
$$(2R-|x-y|)^\gamma u(x)\leq\left(\frac{1+\rho}{2}\right)^{-1}(2R-|z-y|)^\gamma
u(x)
,$$\\
and therefore, by \eqref{10}
\begin{eqnarray}\label{11}
(2R-|x-y|)^\gamma u(x)&\leq& \frac{2\rho}{1+\rho} (2R-|z-y|)^\gamma
u(z)\\ &\leq& \nonumber  \theta_0\max_{x'\,\in {\mathcal{C}\cap
B_{2R}(y)}}(2R-|x'-y|)^\gamma u(x')\end{eqnarray} where
$$\theta_0=\displaystyle\frac{2\rho}{1+\rho}<1.$$

It remains to consider the case where

\begin{equation}\label{12}
    \delta(x)\geq \left(1-\left(\frac{1+\rho}{2}\right)^{\frac{1}{\gamma}}\right)
    \frac{2R-|x-y|}{8\sqrt{d}}.
\end{equation}
  In follows from (\ref{12}) that
$$(2R-|x-y|)^\gamma u(x)\leq\varepsilon_0^{-\gamma}\delta(x)^\gamma u(x)
\leq \varepsilon_0^{-\gamma}\max_{x'\,\in\mathcal{C}\cap
B_{2R}(y)}\delta(x')^\gamma u(x')$$ where
$8\sqrt d \varepsilon_0=\displaystyle
1-\left(\frac{1+\rho}{2}\right)^{\frac{1}{\gamma}}$ and, thanks to
\eqref{8},
\begin{equation}\label{13}
   (2R-|x-y|)^\gamma u(x)\leq C\,\varepsilon_0^{-\gamma}R^\gamma
    u(y+Re_1).
\end{equation}

Putting together \eqref{11} and \eqref{13} and taking the supremum
over $\mathcal{C}\cap B_{2R}(y)$, we deduce that
$$
  \max_{x\,\in {\mathcal{C}\cap
B_{2R}(y)}}(2R-|x-y|)^\gamma u(x) \leq  \theta_0 \max_{x\,\in
{\mathcal{C}\cap B_{2R}(y)}}(2R-|x-y|)^\gamma u(x)
+C\,\varepsilon_0^{-\gamma}R^\gamma
    u(y+Re_1) .$$
Using the fact that  $(2R-|x-y|)\approx R$ for
$x\,\in\mathcal{C}\cap B_{R}(y)$ we deduce the estimate
\eqref{1}.$\hfill\Box$ \bigskip
\subsection{Boundary Harnack principle}

Carleson estimate can be extended to the ratio $u/v$ of
positive harmonic functions.

\begin{theorem}\label{thm6} {\bf (Boundary Harnack principle)} Let
$y\in\partial\mathcal{C}$ and $K>0$ large enough. Assume that $u$
and $v$ are two nonnegative harmonic functions in $\mathcal{C}\cap
B_{3KR}(y)$.
 Assume that $u,\, v =0$ on
$\partial\mathcal{C}\cap B_{2KR}(y)$. Then
\begin{equation}\label{14}
    \max_{x\,\in\:\mathcal{C}\cap B_R(y)}\frac{u(x)}{v(x)}\leq C\:
    \frac{u(y+Re_1)}{v(y+Re_1)},\quad R\geq C,
\end{equation}
where $C=C(d,\alpha,\Gamma,A,K)>0$.
\end{theorem}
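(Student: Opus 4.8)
The plan is to deduce the boundary Harnack inequality \eqref{14} from the Carleson estimate (Theorem \ref{thm5}) by the classical ``oscillation of the quotient'' scheme, feeding in the Lipschitz geometry exactly as in Proposition \ref{prop1}. Fix $y\in\partial\mathcal{C}$; after a translation we take $y=0$, and by homogeneity we normalise $u(Re_1)=v(Re_1)=1$, so that \eqref{14} becomes $\max_{x\in\mathcal{C}\cap B_R(0)}u(x)/v(x)\le C$. An interior reduction comes first: the local Harnack principle used in the proof of Theorem \ref{thm5}, combined with Theorem \ref{thm3}, shows that any two points of $\mathcal{C}$ lying in $B_{CR}(0)$ at distance $\ge\varepsilon_0R$ from $\partial\mathcal{C}$ are joined by a Harnack chain whose length is bounded in terms of $d,\alpha,\Gamma,A,\varepsilon_0$; since $\delta(Re_1)\ge cR$, this yields $u\le C$ and $v\ge c$ on $\{x\in\mathcal{C}\cap B_R(0):\delta(x)\ge\varepsilon_0R\}$, so \eqref{14} holds for those $x$. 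It remains to bound $u(x)/v(x)$ when $\delta(x)$ is small.

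Next comes a reduction to a comparison at interior reference (``corkscrew'') points. Since $\mathcal{C}$ is Lipschitz, to every $x_0\in\partial\mathcal{C}$ and every scale $r>0$ one attaches a point $A_r(x_0)\in\mathcal{C}$ with $\delta(A_r(x_0))\ge c_*r$ and $|A_r(x_0)-x_0|\le C_*r$, where $c_*,C_*$ depend only on $A$. Given $x\in\mathcal{C}\cap B_R(0)$ with $s=\delta(x)$ small, take $x_0\in\partial\mathcal{C}$ with $|x-x_0|=s$. The Carleson estimate applied around $x_0$ at scale $s$ gives $u(x)\le\max_{\mathcal{C}\cap B_s(x_0)}u\le C\,u(A_s(x_0))$, and Theorem \ref{thm3}, applied along a bounded Harnack chain joining $x$ to $A_s(x_0)$ inside $\mathcal{C}$ (both points sit at distance $\approx s$ from $\partial\mathcal{C}$), gives $v(x)\ge c\,v(A_s(x_0))$. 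Hence
$$\frac{u(x)}{v(x)}\le C\,\frac{u(A_s(x_0))}{v(A_s(x_0))},$$
and the whole problem is reduced to bounding the quotient of $u$ and $v$ at the corkscrew points $A_r(x_0)$, uniformly in $r\le R$ and $x_0\in\partial\mathcal{C}\cap B_R(0)$.

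The core of the argument is then an oscillation estimate for the quotient. For $\xi\in\partial\mathcal{C}\cap B_R(0)$ set $\phi(r)=\max_{\mathcal{C}\cap B_r(\xi)}u/v$ and $\psi(r)=\min_{\mathcal{C}\cap B_r(\xi)}u/v$ (both finite for $r\ge1$, since $v>0$ on the finite set $\mathcal{C}\cap B_r(\xi)$). With $M=\phi(r)$ and $m=\psi(r)$, the functions $w_1=Mv-u$ and $w_2=u-mv$ are nonnegative, harmonic in $\mathcal{C}\cap B_r(\xi)$, vanish on $\partial\mathcal{C}\cap B_r(\xi)$, and satisfy $w_1+w_2=(M-m)v$; hence at the corkscrew point $A_{r/\Lambda}(\xi)$ at least one of them, say $w_1$, obeys $w_1(A_{r/\Lambda}(\xi))\ge\frac{1}{2}(M-m)\,v(A_{r/\Lambda}(\xi))$. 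The decisive step is to upgrade this to $w_1\ge\theta(M-m)\,v$ throughout $\mathcal{C}\cap B_{r/\Lambda}(\xi)$, for a large but fixed $\Lambda$ and a $\theta=\theta(d,\alpha,\Gamma,A)>0$; this is carried out, for the auxiliary function $w_1$, by re-running the very mechanism behind Theorem \ref{thm5} and Proposition \ref{prop1}, i.e. by combining the Carleson estimate for $w_1$, Theorem \ref{thm3} along Harnack chains following $\partial\mathcal{C}$, the growth bound \eqref{8}, and the quantitative thickness of the complement recorded in \eqref{74}. Granting it, $u=Mv-w_1\le(M-\theta(M-m))v$ on $\mathcal{C}\cap B_{r/\Lambda}(\xi)$, so $\phi(r/\Lambda)\le M-\theta(M-m)$; since $\psi(r/\Lambda)\ge m$ trivially, we obtain the contraction $\phi(r/\Lambda)-\psi(r/\Lambda)\le(1-\theta)\bigl(\phi(r)-\psi(r)\bigr)$. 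Iterating from scale $R$ downwards gives $\phi(r)-\psi(r)\le C(r/R)^{\eta}\bigl(\phi(R)-\psi(R)\bigr)$ with $\eta=\log\frac{1}{1-\theta}/\log\Lambda>0$; so $u/v$ extends continuously up to $\xi$. Feeding this H\"older modulus back into the corkscrew comparison of the previous paragraph --- applied now around $x_0$ at scales up to the largest one, which is what forces $K$ to be taken large enough that all the balls invoked by Theorem \ref{thm5} and by the Harnack chains sit inside $B_{3KR}(0)$ and $B_{2KR}(0)$ --- and then into the interior estimate of the first paragraph, one arrives at $\max_{x\in\mathcal{C}\cap B_R(0)}u(x)/v(x)\le C\,u(Re_1)/v(Re_1)$, that is, \eqref{14}.

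The hard part is the upgrade step in the previous paragraph: passing from ``$w_1$ is comparable to $v$ at a single corkscrew point'' to ``$w_1\gtrsim v$ on all of $\mathcal{C}\cap B_{r/\Lambda}(\xi)$, including at points arbitrarily close to $\partial\mathcal{C}$'', and, hand in hand with it, securing a \emph{uniform} a priori bound for $\phi(R)$. This is exactly where the non-divergence, spatially inhomogeneous nature of $L$ makes itself felt: there is no Green-function symmetry to lean on, so one cannot argue as in the divergence-form theory, and the estimate must be assembled from the Carleson estimate, interior Harnack chains run along the boundary, the growth bound \eqref{8}, and the interior cone $\mathcal{C}'\subset\mathcal{C}^c$ used in \eqref{74} --- the circle of ideas of \cite{BaBu}, \cite{Bau}, \cite{FS}, \cite{FSY}. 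Once $K$ is fixed large enough, the normalisations, the Harnack chains and the final assembly are routine.
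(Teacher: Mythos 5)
The paper's proof of Theorem~\ref{thm6} does not use the oscillation-of-the-quotient scheme you propose. Its engine is Lemma~\ref{lem2}, a direct comparison of exit probabilities: for $K$ large and any $x\in B_r(y)\cap\mathcal{C}$, the chance of leaving $\mathcal{C}_{Kr,r}(y)$ through the interior ``top'' $\mathcal{D}_{Kr,r}(y)$ dominates the chance of leaving through the lateral side. Lemma~\ref{lem2} is established by a multi-scale iteration whose two inputs are asymmetric: a barrier-type \emph{polynomial} lower bound \eqref{19} for $u$ in $\delta(x)/r$, and an \emph{exponential-in-$K$} upper bound \eqref{23} for $v$ on $\overline{\mathcal{C}_{r,r}(y)}$ obtained by chaining through shells. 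Choosing $K$ so that $e^{-NK}\le \alpha K^{-\beta}$ makes $v\le u/2$ on the shell $\overline{\mathcal{C}_{r,r}(y)}\setminus\mathcal{C}_{r,r/K}(y)$, and the argument then rescales self-similarly to $r/K^i$. With Lemma~\ref{lem2} in hand, Theorem~\ref{thm6} follows in a few lines by comparing $u/c_0-c_0v$ and $c_0v$ with the harmonic measures of the lateral side and the top, respectively.

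Your proposal has a genuine gap at what you yourself call ``the hard part.'' The upgrade step --- passing from $w_1(A_{r/\Lambda}(\xi))\gtrsim(M-m)\,v(A_{r/\Lambda}(\xi))$ at a single corkscrew point to $w_1\gtrsim(M-m)\,v$ on all of $\mathcal{C}\cap B_{r/\Lambda}(\xi)$ --- is precisely a boundary Harnack comparison between the two nonnegative harmonic functions $w_1$ and $v$, both vanishing on $\partial\mathcal{C}$. That is the statement of Theorem~\ref{thm6} itself, so the argument is circular. The tools you list do not break this circle: the Carleson estimate and the growth bound \eqref{8} only give \emph{upper} bounds that deteriorate as $\delta(x)\to 0$; interior Harnack chains only compare values away from $\partial\mathcal{C}$; the thickness bound \eqref{74} feeds decay estimates for \emph{superharmonic} majorants, not matching lower bounds for $w_1$ against $v$. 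None of them produces the lower bound for $w_1$ that tracks $v$'s rate of vanishing, which is the whole difficulty in the non-divergence setting (no Green-function symmetry). The same circularity infects the a priori bound $\phi(R)<\infty$ uniformly needed to initialize your contraction: a uniform bound on $\max_{\mathcal{C}\cap B_R} u/v$ is already boundary Harnack at one scale. What the paper supplies, and what your outline lacks, is an independent mechanism --- the polynomial-vs-exponential mismatch plus the $r\mapsto r/K$ self-similar iteration of Lemma~\ref{lem2} --- that delivers this one-scale comparison without presupposing it. Without an analogue of Lemma~\ref{lem2}, your oscillation argument never gets started.
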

The above formulation of the boundary Harnack principle follows the classical formulation but
the proof of  \eqref{14} which will be given below shows
that the assumption $v=0$ on $\partial\mathcal{C}\cap B_{2R}(y)$ is
not needed so that \eqref{1} constitutes a special case of
\eqref{14}.

The estimate \eqref{14} is an immediate consequence of the lower estimate contained in the following lemma.

For $y\in \partial \mathcal{C}$ and $R\geq r\geq 1$, we
shall denote by
\begin{eqnarray*}
  \mathcal{D}_{R,r}(y)&=&B_R(y)\cap \{x\in \mathcal{C},\delta(x)>r\}; \\
  \mathcal{C}_{R,r}(y)&=&\left(B_R(y)\cap \mathcal{C}\right) \,\setminus\,\mathcal{D}_{R,r}(y).
\end{eqnarray*}
For $R\geq r\geq 1$, the boundary of $\mathcal{C}_{R,r}$ is the union of three sets: the ``bottom'' $\partial\mathcal{C}_{R,r}\cap \mathcal{C}^c$, the ``lateral side'' $\partial\mathcal{C}_{R,r}\cap\{x\in\mathcal{C},\:0\leq \delta(x)\leq r\}$ and the ``top'' $\partial\mathcal{C}_{R,r}\cap \mathcal{D}_{R,r}$.
\begin{lem}\label{lem2} There exists a constant $K_0>0$ such that for all $K\geq K_0$ and for all $y\in \partial\mathcal{C}, $ $r\geq 1$,
\begin{equation}\label{15}
\min_{x\in \mathcal{C}\cap B_r(y)}\frac{\mathbb{P}_x
\left[S(\tau_{\mathcal{C}_{Kr,r}(y)})\in \mathcal{D}_{Kr,r}(y)
\right]}{\mathbb{P}_x \left[S(\tau_{\mathcal{C}_{Kr,r}(y)})\in
\left(\partial\mathcal{C}_{Kr,r}(y)\cap\mathcal{C}\right)\,\setminus\,
\mathcal{D}_{Kr,r}(y) \right]}\geq 1
\end{equation} where $\tau_{\mathcal{C}_{Kr,r}(y)}$ denotes
the exit time from $\mathcal{C}_{Kr,r}(y)$.
\end{lem}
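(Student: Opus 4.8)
The plan is to rewrite both probabilities in the displayed quotient as probabilities of competing events for the walk and to compare them by iterating one local estimate along a chain of concentric spheres centred at $y$. Write $\delta(x)=\mathrm{dist}(x,\partial\mathcal C)$, let $H=\inf\{n\ge0:\delta(S_n)>r\}$ be the first time the walk rises above depth $r$, and for a set $U$ write $\tau_U=\inf\{n\ge0:S_n\notin U\}$. Fix $x\in\mathcal C\cap B_r(y)$. Inspecting the possible exit points of the shell $\mathcal C_{Kr,r}(y)$ one sees that the numerator in \eqref{15} is exactly $p(x):=\mathbb P_x[\,H<\tau\wedge\tau_{B_{Kr}(y)}\,]$ (the walk reaches depth $>r$ inside $B_{Kr}(y)$ before dying or leaving $B_{Kr}(y)$), while the denominator is exactly $q(x):=\mathbb P_x[\,\tau_{B_{Kr}(y)}<\tau\wedge H\,]$ plus a \emph{jump term}: the probability that the walk leaves the shell in a single step, landing at a point of $\mathcal C$ outside $B_{Kr}(y)$ of depth $>r$. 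Hence it suffices to prove $p(x)\ge q(x)+(\text{jump term})$, with $K_0=K_0(d,\alpha,\Gamma,A)$ to be chosen large.

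The whole argument rests on the following \emph{local escape estimate} $(\star)$: there is $\Lambda_0=\Lambda_0(d,\alpha,\Gamma,A)\ge1$ such that for every $z\in\mathcal C$ with $\delta(z)\le r$,
\[
\mathbb P_z\bigl[\,H<\tau\wedge\tau_{B_{\Lambda_0 r}(z)}\,\bigr]\ \ge\ \mathbb P_z\bigl[\,\tau_{B_{\Lambda_0 r}(z)}<\tau\wedge H\,\bigr],
\]
i.e.\ from any point at depth $\le r$ it is at least as likely to climb above depth $r$ before dying or leaving $B_{\Lambda_0 r}(z)$ as to do the reverse. Granting $(\star)$, pick spheres $\Sigma_j=\partial B_{\rho_j}(y)$ with $\rho_0=2\sqrt d\,r$, consecutive radii spaced so that from any $z$ on $\Sigma_j$ the walk must leave $B_{\Lambda_0 r}(z)$ before it can reach $\Sigma_{j+1}$ (a fixed multiple of $\Lambda_0 r$ suffices), and $\rho_J=Kr$, so $J\asymp K/\Lambda_0$. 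Put $\sigma_j=\tau_{B_{\rho_j}(y)}$ and $P_j=\mathbb P_x[\sigma_j<\tau\wedge H]$, so that $1=P_0\ge P_1\ge\cdots\ge P_J=q(x)$. Applying $(\star)$ at $z=S_{\sigma_j}$ (which has $\delta(S_{\sigma_j})\le r$ on $\{\sigma_j<\tau\wedge H\}$) and the strong Markov property at $\sigma_j$ yields
\[
\mathbb P_x\bigl[\,\sigma_j<H<\sigma_{j+1}\wedge\tau\,\bigr]\ \ge\ P_{j+1}\qquad(0\le j\le J-1).
\]
These events are pairwise disjoint, and on each of them the walk rises above depth $r$ inside $B_{Kr}(y)$ before dying, so $p(x)\ge\sum_{j=0}^{J-1}P_{j+1}=\sum_{j=1}^JP_j\ge P_{J-1}+P_J$ as soon as $J\ge2$, i.e.\ once $K\ge K_0$. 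Finally the jump term is bounded by $P_{J-1}$: such a jump lands within the range of $\Gamma$ of $\partial B_{Kr}(y)$, hence is possible only after the walk has reached $\Sigma_{J-1}$ at depth $\le r$ without dying or climbing. Combining, $p(x)\ge P_{J-1}+P_J\ge q(x)+(\text{jump term})$, as wanted.

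The main obstacle is $(\star)$, and within it the case of $z$ hugging $\partial\mathcal C$, where $\delta(z)=O(1)$: then both sides of $(\star)$ are small and of the same order $\delta(z)/r$, so one cannot bound them separately --- one has to compare them. I would prove $(\star)$ by reducing to one dimension via the Lipschitz structure. With $\psi(x)=x_1-\varphi(x')$ one has $(1+A^2)^{-1/2}\psi\le\delta\le\psi$, and along the walk $\psi(S_n)$ has increments bounded in terms of $A$ and $\max_{e\in\Gamma}|e|$, drift bounded by $A\max_{e\in\Gamma}|e|$ (by centering), and conditional variance bounded below --- the latter because the ellipticity \eqref{0} together with centering prevent the normal-direction increment from being almost surely constant. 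Hence on scale $r$ the process $\psi(S_n)$ behaves like a genuine one-dimensional walk: uniformly in $z$ it exits the slab $\{0<\delta\le r\}$ (i.e.\ the walk dies or climbs above depth $r$) within time $C_1r^2$ with probability $\ge3/4$, while by Doob's maximal inequality for the martingale $S_n$ it leaves $B_{\Lambda_0 r}(z)$ within time $C_1r^2$ with probability at most $C_2/\Lambda_0^2\le1/4$; this already bounds the right-hand side of $(\star)$ by $1/2$. The inequality $(\star)$ itself then follows from comparing the two exit probabilities: each equals $\asymp\delta(z)/r$ times a factor produced by a gambler's-ruin computation for $\psi(S_n)$ (optional stopping, with the bounded drift absorbed as a lower-order correction on scale $r$ and the finite-range overshoot controlled), and once $\Lambda_0$ is large the upward-exit probability wins, because a sideways exit from $B_{\Lambda_0 r}(z)$ without dying or climbing demands in addition that $\psi(S_n)$ stay confined to an interval of width $\asymp r$ for a time $\gtrsim(\Lambda_0 r)^2$, at a cost that decays as $\Lambda_0\to\infty$. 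Carrying out this comparison uniformly in $z$ and $r$ is the technical core; the remainder is the bookkeeping described above.
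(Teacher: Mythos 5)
Your overall strategy is genuinely different from the paper's. The paper reformulates \eqref{15} as a comparison $v\le u$ for two functions satisfying \eqref{16}--\eqref{17}, then proves a power-law \emph{lower} bound $u(x)\gtrsim(\delta(x)/r)^{\beta}$ via the superharmonic-extension estimate of Kuo--Trudinger (combining the Aleksandrov--Bakel'man--Pucci maximum principle, a barrier, and a Calder\'on--Zygmund covering), proves an \emph{upper} bound $v\le e^{-NK}$ by iterating a decay estimate across the concentric shells $\mathcal{C}_{(2j-1)r,r}(y)$, and finally iterates the comparison down in scale over $r/K^i$. You instead probabilistically interpret the numerator and denominator, reduce the whole lemma to a single-scale ``local escape estimate'' $(\star)$, and iterate it outward along concentric spheres $\Sigma_0,\dots,\Sigma_J$. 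I checked the bookkeeping of your outward iteration (the identification of numerator and denominator, the disjointness, the bound $\mathbb P_x[\sigma_j<H<\sigma_{j+1}\wedge\tau]\ge P_{j+1}$ via the strong Markov property, and the jump-term bound by $P_{J-1}$); granting $(\star)$ with $\Lambda_0$ fixed and $K$ large, that part is sound and gives \eqref{15}.

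The genuine gap is $(\star)$ itself, and your sketch of it contains an error that would need to be fixed. You claim both exit probabilities in $(\star)$ behave like $\asymp\delta(z)/r$ ``times a factor produced by a gambler's-ruin computation for $\psi(S_n)$, with the bounded drift absorbed as a lower-order correction.'' For a genuinely Lipschitz (as opposed to $C^{1,1}$) boundary this is false on two counts. First, the probability of climbing to depth $>r$ before dying is not linear in $\delta(z)$; it scales like $(\delta(z)/r)^{\beta}$ with an exponent $\beta=\beta(d,\alpha,\Gamma,A)$ that is in general $\ne1$ and depends on the Lipschitz constant $A$ (compare the paper's \eqref{19}, or think of a narrow Lipschitz cone). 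Second, and more seriously, the conditional drift of $\psi(S_n)=S_n^1-\varphi(S_n')$ is \emph{not} a lower-order correction: by centering the $S^1$ part has zero drift, but $\mathbb E[\varphi(x'+\Delta S')-\varphi(x')]$ is of order $A\max_{e\in\Gamma}|e|$ per step, i.e.\ of the same order as a single increment, with sign oscillating on every scale. So $\psi(S_n)$ cannot be treated as a small perturbation of a one-dimensional martingale, and optional stopping does not yield the linear gambler's-ruin estimate you invoke. This is exactly the difficulty the paper's appeal to \cite[3.24]{KT01} is designed to overcome. You acknowledge that $(\star)$ in the regime $\delta(z)=O(1)$ is ``the technical core,'' but as written the sketch rests on a scaling claim that fails in Lipschitz geometry, so $(\star)$ remains unproved and the argument does not close.
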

\noindent{\it
 Proof of Theorem \ref{thm6}.}
In order to derive estimate \eqref{14} from \eqref{15}, we first
observe that it is always possible to assume that
$u(y+Re_1)=v(y+Re_1)=1$. For a large $R$, Carleson estimate
\eqref{1} implies that the function
$u$ is dominated by a positive constant $c_0$ in the region
$B_{KR}(y)\cap \mathcal{C}$. This constant $c_0$ can be chosen so that by Harnack principle the lower estimate
$v\geq \frac{1}{c_0}$ holds on $\mathcal{D}_{KR,R}(y)$. Let
$v_0=c_0v$ and $u_0=\displaystyle \frac{u}{c_0}-v_0$. Let $x\in
B_{R}(y)\cap \mathcal{C}$. We have:
\begin{eqnarray*} u_0(x) &\leq &
\mathbb{P}_x\left[S(\tau_{\mathcal{C}_{KR,R}(y)})\in
\left(\partial\mathcal{C}_{KR,R}(y)\cap\mathcal{C}\right)\,\setminus\, \mathcal{D}_{KR,R}(y)\right]\\
&\leq & \mathbb{P}_x\left[S(\tau_{\mathcal{C}_{KR,R}(y)})\in
\mathcal{D}_{KR,R}(y)\right]\\
& \leq & v_0(x),
\end{eqnarray*} where the second inequality follows from \eqref{15}. We deduce then that
$$\frac{u(x)}{c_0}-c_0v(x)\leq c_0 v(x),
\quad x\in B_{R}(y)\cap \mathcal{C}.$$ So that
$$\frac{u(x)}{v(x)}\leq 2 c_0^2,\quad x\in B_{R}(y)\cap \mathcal{C},$$
which completes the proof of \eqref{14}. $\hfill\Box$
\\
{\it Proof of Lemma \ref{lem2}.} To prove estimate
\eqref{15} it suffices to show that if  $u,\, v :
\overline{\mathcal{C}_{Kr,r}(y)}\rightarrow \mathbb{R}$ (where $y\in
\partial\mathcal{C}$, $r\geq 1$ are fixed) satisfy
\begin{equation}\label{16}
\left\{
\begin{array}{cc}
 \displaystyle u(x)=\sum_{e\in \Gamma}\pi(x,e)u(x+e),  & x\in\mathcal{C}_{Kr,r}(y) \\
  u(x)\geq 0  & \mbox{in} \quad {\overline{\mathcal{C}_{Kr,r}(y)}} \\
  u(x)\geq 1 &\mbox{ on} \quad \partial\mathcal{C}_{Kr,r}(y)\cap \mathcal{D}_{Kr,r}(y)
\end{array}\right.
\end{equation}
\begin{equation}\label{17}
\left\{\begin{array}{ccc} &\displaystyle v(x)=
\sum_{e\in \Gamma}\pi(x,e)v(x+e),& \quad x\in\mathcal{C}_{Kr,r}(y)\\
&v(x)\leq 1 & \quad \mbox{in} \quad
{\overline{\mathcal{C}_{Kr,r}(y)}}\\
& v(x)\leq 0 & \quad \mbox{ on} \quad
\partial\mathcal{C}_{Kr,r}(y)\cap \mathcal{D}_{Kr,r}(y)
\end{array}\right.
\end{equation}
then we have
\begin{equation}\label{18}
    v(x)\leq u(x), \quad x\in B_r(y)\cap\mathcal{C}
\end{equation}
provided that $K\geq K_0$ is large enough.

First we prove that under \eqref{16} the function $u$ satisfies
\begin{equation}\label{19}
u(x) \geq 2 \alpha \left( \frac{ \delta(x)}{r}\right)^\beta,
\,\,\,\,\, x \in B_r(y)\cap \mathcal{C},
\end{equation}
for appropriate constants $\alpha, \, \beta >0.$

The proof of  \eqref{19} relies on the following construction.

We assume $K$ large enough and we define $ \tilde u :  {\overline {
B_{Mr}(y) \cap \mathcal{C}}} \longrightarrow \mathbb{R} $ by
\begin{equation*}
\left\{
\begin{array}{cc}
 \displaystyle \tilde u(x)=\sum_{e\in \Gamma}\pi(x,e)
 \tilde u(x+e),  & x\in  B_{Mr}(y) \cap \mathcal{C} \\
  \tilde u  = \min(u,1)  & \mbox{on} \quad
\partial \left( B_{Mr}(y) \cap \mathcal{C} \right) \,\backslash\,
 { \mathcal{D}_{Kr,r}(y)} \\
  \tilde  u = 1  &\mbox{on} \quad
\partial \left( B_{Mr}(y) \cap \mathcal{C} \right)
 \cap
 { \mathcal{D}_{Kr,r}(y)}
\end{array}\right.
\end{equation*}
where $0<M \leq K $ is chosen so that $\tilde y = y +(M-1) re_1$
satisfies
\begin{equation*}\label{20}
 \delta( \tilde y) \geq 10 r.
\end{equation*}

Let $\mathcal{U} = \left( B_{Mr}(y) \cap \mathcal{C} \right) \cap
B_{2r}(\tilde{y})$ and $w: \overline{B_{2r}(\tilde{y})}
\longrightarrow \mathbb{R} $ be defined by
$$ w(x) =  \tilde u (x), \,\,\,\, x\in \overline{\mathcal{U}};\; w(x)=1,\quad x \in \mathcal{U}^c\cap \overline{B_{2r}(\tilde{y})}. $$
It is easy to see that $w$ is superharmonic. Let $\tilde{z}=y+(M-2)re_1$. By the same argument used
in the proof of the lower estimate \eqref{72} combined with Harnack principle, we see that $w(\tilde{z})$ satisfies a lower estimate $w(\tilde{z})\geq c$. It follows then that $ \tilde{u}(\tilde{z})\geq c$.

Since $u\geq 1$ on
$\partial\mathcal{C}_{Kr,r}(y)\cap \mathcal{D}_{Kr,r}(y)$, we deduce
by the maximum principle that $u\geq \tilde{u}$ on
$B_r(y)\cap\mathcal{C}$. Combining with Harnack inequality we de
deduce \eqref{19}.

It follows from \eqref{19} that if $x\in
\overline{\mathcal{C}_{r,r}(y)}\,\backslash\,
\mathcal{C}_{r,r/K}(y)$ then we have
\begin{equation}\label{22}
    u(x)\geq 2\alpha K^{-\beta}.
\end{equation}

 Let us now prove that there exists $N>0$ such that
\begin{equation}\label{23}
    v(x)\leq e^{-NK}, \quad x \in \overline{\mathcal{C}_{r,r}(y)}.
\end{equation}
Let $j=1,\ldots, \lfloor\frac{K-1}{2}\rfloor$  and let $x_j\in
\partial\mathcal{C}_{(2j-1)r,r}(y)$ be such that
$$v(x_j)=\max\left\{v(x),\quad x \in
\overline{\mathcal{C}_{(2j-1)r,r}(y)}\right\}.$$ Let $\mathcal{U}_j=
B_{2r}(x_j)\cap {\mathcal{C}_{(2j+1)r,r}(y)}$ and
$\tau_{\mathcal{U}_j}$ be the exit time from $\mathcal{U}_j$. By the
same argument used in the proof of \eqref{72} we see that
$$\mathbb{P}_{x_j}\left[S(\tau_{\mathcal{U}_j})\in \mathcal{D}_{Kr,r}(y)
\right]\geq c>0.$$ Using \eqref{17} (in particular, the fact that
$v\leq0$ on
$\partial\mathcal{C}_{Kr,r}(y)\cap \mathcal{D}_{Kr,r}(y)$) we deduce
then that
$$v(x_j)\leq\theta\max_{
\overline{\mathcal{U}_j}}v,$$ where $0<\theta<1$. Hence
$$\max\left\{ v(x),\quad x\in
\overline{\mathcal{C}_{(2j-1)r,r}(y)}\right\}\leq\theta \max\left\{
v(x),\quad x\in \overline{\mathcal{C}_{(2j+1)r,r}(y)}\right\}.$$
Iterating this estimate we obtain
$$\max\left\{ v(x),\quad x\in
\overline{\mathcal{C}_{r,r}(y)}\right\}\leq\theta^{\lfloor\frac{K-1}{2}\rfloor} \max\left\{ v(x),\quad
x\in \overline{\mathcal{C}_{Kr,r}(y)}\right\}\leq e^{-NK},$$
 which proves
\eqref{23}. It follows from \eqref{23} that
\begin{equation}\label{231}
v\leq\alpha
K^{-\beta}\quad \mbox{in}\quad \overline{\mathcal{C}_{r,r}(y)}\end{equation}
provided that $K$ is large enough.

From the previous considerations it follows that
$$u_1=\frac{K^\beta}{2\alpha}u\geq 0 \quad \mbox{ in}\quad
\overline{\mathcal{C}_{r,r/K}(y)}$$ with
$$u_1\geq1 \quad
\mbox{in}\quad \partial
\mathcal{C}_{r,r/K}(y)\cap \mathcal{D}_{r,r/K}(y)$$ thanks to
\eqref{22} and, thanks to \eqref{231},
$$v_1=\frac{K^\beta}{2\alpha}(2v-u)\leq \frac{K^\beta}{\alpha}v\leq 1
\quad \mbox{in}\quad\overline{\mathcal{C}_{r,r/K}(y)},$$ with
$$v_1\leq 0  \quad \mbox{ on}\quad \partial\mathcal{C}_{r,r/K}(y)\cap\mathcal{D}_{r,r/K}(y).$$
In particular, we have
$$u_1-v_1= \frac{K^\beta}{\alpha}(u-v) \geq 0 \quad \mbox{ on}\quad
\overline{\mathcal{C}_{r,r}(y)\,\backslash\,\mathcal{C}_{r,r/K}(y)}.$$ It follows  that
$u_1$, $v_1$ satisfy the same assumptions as $u$, $v$ with $r$
replaced by $r/K$. We can then iterate and define $u_i$, $v_i$ such
that
$$u_i-v_i=\left(\frac{K^\beta}{\alpha}\right)^i(u-v) \geq 0
\quad \mbox{ on}\quad \overline{\mathcal{C}_{r/K^i,r/K^i}(y)\,\backslash\, \mathcal{C}_{r/K^i,r/K^{i+1}}(y)}$$
$i=1,2\ldots$. We deduce then that
\begin{equation*}\label{24}
    u-v\geq 0 \quad \mbox{ on}\quad
S(y)=\bigcup_{i\geq
0}\overline{\mathcal{C}_{r/K^i,r/K^i}(y)\,\backslash\,
\mathcal{C}_{r/K^i,r/K^{i+1}}(y)}.
\end{equation*}
 Let $x\in
B_r(y)$ and $\tilde{x}\in\partial\mathcal{C}$ satisfying
$\delta(x)=|x-\tilde{x}|$. Then
$$\mathcal{C}_{Kr,r}(\tilde{x})\subset
\mathcal{C}_{(K+2)r,r}(y).$$ Replacing $K$ by $K+2$ in the
previous considerations we deduce that $u\geq v$ on $S(\tilde{x})$
that contains $x$. This shows that $u(x)\geq v(x)$ and completes the
proof of \eqref{18}. $\hfill \Box$\\ \\
{\it Proof of Theorem \ref{thm1}.} The proof is the same as \cite{Bou}. We observe  that instead of Carleson estimate, we can simply use the estimate
$$u(\xi)\leq C\, e^{C|\xi-\zeta|}u(\zeta),\quad \xi,\zeta\in \mathcal{C},$$
which follows from uniform ellipticity. The advantage of this estimate is that it works for all connected infinite domains, and not just for domains satisfying Carleson estimate. It is already enough to run the diagonal process argument used in \cite{Bou}.

As in \cite{Bou} the uniqueness can be deduced by essentially the same method as in \cite[Proof of Theorem 3]{Ai} and \cite[Lemma 6.2]{An}.
  $\hfill\Box$

\end{document}